\newcommand{\R}{\mathbb{R}}
\newcommand{\Aut}{\operatorname{Aut}}
\newcommand{\im}{\operatorname{im}\,}
\newcommand{\ad}{\operatorname{ad}}
\newcommand{\Ad}{\operatorname{Ad}}
\newcommand{\liealg}[1]{\mathfrak{#1}}
\newcommand{\rank}[1]{\operatorname{rank}{#1}}
\newcommand{\rest}[3]{{#1}_{#2\bigl.\bigr|#3}}
\newcommand{\liet}{\liealg{t}}
\newcommand{\lieg}{\liealg{g}}
\newcommand{\liek}{\liealg{k}}
\newcommand{\lieh}{\liealg{h}}
\newcommand{\inlinedef}[1]{\textit{#1}}
\theoremstyle{definition}
\newtheorem{theorem}{Theorem}[section]
\newtheorem{proposition}[theorem]{Proposition}
\newtheorem{corollary}[theorem]{Corollary}
\newtheorem{remark}[theorem]{Remark}
\newtheorem{example}[theorem]{Example}
\newcommand{\centeredalign}[1]
\begin{document}
	\title[Equivariant formality of homogeneous spaces]{Equivariant formality of isotropy actions on homogeneous spaces defined by Lie group automorphisms}
	\author{Oliver Goertsches}
	\address{Mathematisches Institut der Universit\"at M\"unchen, Theresienstra\ss{}e 39, 80333 M\"unchen, Germany.}
	\email{goertsches@math.lmu.de}
	\author{Sam Haghshenas Noshari}
	\address{Mathematisches Institut der Universit\"at M\"unchen, Theresienstra\ss{}e 39, 80333 M\"unchen, Germany.}
	\email{noshari@math.lmu.de}
	\begin{abstract}
		We show that the isotropy action of a homogeneous space $G/K$, where $G$ and $K$ are compact, connected Lie groups and $K$ is defined by an automorphism on $G$, is equivariantly formal and that $(G, K)$ is
		a Cartan pair.
	\end{abstract}
	
	\maketitle	
	%%% --- I N T R O D U C T I O N
	\section{Introduction}
An action of a compact Lie group $K$ on a manifold $M$ is called equivariantly formal if its equivariant cohomology $H^{\ast}_K(M)$ is a free module over the polynomial ring $S^{\ast}(\liek^{\ast})^{\Ad_K}$. Such actions are 
of interest for several reasons, most importantly because their equivariant cohomology algebra is relatively easily accessible for explicit computations: e.g., if $K$ is a torus, then the equivariant cohomology embeds, by 
Borel's Localization Theorem, into the equivariant cohomology of the fixed point set. In the following, we shall address the problem of determining whether or not the \inlinedef{isotropy action} on homogeneous spaces is
equivariantly formal. In more detail, we are interested in finding conditions for a compact, connected Lie group $K$ to act in an equivariantly formal fashion on the homogeneous space $G/K$ obtained from the compact 
and connected Lie group $G$. In this generality, the problem was, for example, investigated in \cite{shiga}, for the case of equal rank pairs (together with a description of the cohomology ring) in \cite{guilleminGKMDescription}, 
by the first author in \cite{goertsches} if $G/K$ is a symmetric space of the compact type and, more recently, in \cite{carlson} for spaces $G/K$ with $K$ a circular subgroup of $G$. Also here we shall focus on a 
special class of homogeneous spaces, namely those which are defined by Lie group automorphisms. For such we are able to prove:

\begin{theorem}
	\label{mt-theorem-main-theorem}
	Let $K$ be a Lie subgroup of the Lie group $G$ and suppose both $K$ and $G$ to be compact and connected. If there exists an automorphism on $G$ whose fixed point subalgebra coincides with the Lie algebra $\liek$ of
	$K$ then the pair $(G, K)$ is a Cartan pair and the isotropy action of $K$ on $G/K$ is equivariantly formal.
\end{theorem}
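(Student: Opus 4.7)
The plan is to reduce the theorem to the assertion that $(G,K)$ is a Cartan pair, which by classical results (see, for instance, \cite{shiga}) is equivalent to equivariant formality of the isotropy action on $G/K$. Concretely, $(G,K)$ being a Cartan pair amounts to the Cartan--Koszul spectral sequence for $H^{\ast}(G/K;\R)$ collapsing, or equivalently to $H^{\ast}(BK;\R)$ being free as a module over $H^{\ast}(BG;\R)$ via the restriction map induced by $K \hookrightarrow G$.

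The principal structural tool is the eigenspace decomposition provided by $\sigma$. Since $\sigma$ generates a relatively compact subgroup of $\Aut(G)$, it acts semisimply on $\lieg_{\C}$ with eigenvalues on the unit circle, yielding $\lieg_{\C} = \bigoplus_{\lambda} \lieg_{\lambda}$ with $\lieg_{1} = \liek_{\C}$ and $[\lieg_{\lambda}, \lieg_{\mu}] \subset \lieg_{\lambda\mu}$. In particular, $\lieg = \liek \oplus \liem$ becomes a reductive decomposition satisfying $[\liek, \liem] \subset \liem$, generalising the Cartan decomposition of a symmetric pair used in \cite{goertsches}. Standard theory of automorphisms of compact Lie groups also furnishes a $\sigma$-invariant maximal torus $T \subset G$ whose fixed subalgebra $\liet^{\sigma}$ is a Cartan subalgebra of $\liek$. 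Via Chevalley restriction, the Cartan-pair property then translates into freeness of $S((\liet^{\sigma})^{\ast})^{W_{K}}$ as a module over the image of $S(\liet^{\ast})^{W_{G}}$ under restriction along $\liet^{\sigma} \hookrightarrow \liet$.

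For the argument itself, I would choose transgressive generators of $H^{\ast}(G;\R)$ as eigenvectors for the induced action of $\sigma$ on the primitive subspace, and track their images under the composition of transgression with restriction $H^{\ast}(BG;\R) \to H^{\ast}(BK;\R)$. Guided by the symmetric case, one expects the $\sigma$-invariant transgression elements to generate a regular sequence in $S(\liek^{\ast})^{K}$, forcing the Cartan-pair property by a rank count against the Poincar\'e polynomial of $G/K$. The main obstacle, compared with the involutive case, is the absence of an analogue of the relation $[\liem,\liem] \subset \liek$: for a general $\sigma$ the eigenspaces $\lieg_{\lambda}$ with $\lambda \neq 1$ bracket nontrivially with each other, so the clean bigrading of the Chevalley--Eilenberg complex exploited in \cite{goertsches} is unavailable. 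Overcoming this will likely require either a finer filtration argument that retains the full multiplicative grading, or a reduction to automorphisms of finite order by approximation inside the closure of $\langle \sigma \rangle$ in $\Aut(G)$, thereby recovering enough algebraic rigidity to mimic the involutive strategy.
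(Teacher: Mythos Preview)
Your opening reduction is incorrect: being a Cartan pair is \emph{not} equivalent to equivariant formality of the isotropy action. The paper itself exhibits a counterexample (the circle $S = \{\diag(e^{it}, e^{2it}, e^{-3it})\} \subset SU(3)$): every pair $(G,K)$ with $K$ a one-dimensional torus is a Cartan pair, yet here $\dim H^{\ast}((SU(3)/S)^{S}) = 2 < 4 = \dim H^{\ast}(SU(3)/S)$, so the isotropy action fails to be equivariantly formal. You have also conflated two distinct freeness statements: ``$H^{\ast}(BK)$ free over $H^{\ast}(BG)$'' is the Shiga--Takahashi criterion for equivariant formality, not for being a Cartan pair; the latter is the condition $\dim P_{K} + \dim P_{(G,K)} = \dim P_{G}$, equivalently Sullivan formality of $G/K$. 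Thus reducing the theorem to the Cartan-pair property alone cannot work, and the two assertions in the statement genuinely require separate (though related) arguments.

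Beyond this, what you have written is a plan rather than a proof: you say that one ``expects'' the $\sigma$-invariant transgression classes to form a regular sequence, and that overcoming the absence of $[\liem,\liem]\subset\liek$ ``will likely require'' a filtration or finite-order approximation argument, but neither step is carried out. The paper's route is entirely different and avoids a uniform eigenspace analysis. It first reduces to simple $G$ by decomposing $\lieg$ into its center and simple ideals, on which $\sigma$ acts by permutation; orbits of length $>1$ are handled via the diagonal embedding, which is non-cohomologous to zero. For simple $G$, every automorphism is conjugate to one of the form $\tau\circ\Ad_{\exp X}$ with $\tau$ a Dynkin-diagram automorphism and $X$ in the $\tau$-fixed torus; since $\Ad_{\exp X}$ does not change the maximal torus of the fixed subalgebra, \cref{mt-proposition-relating-cohomology-of-equal-rank-pairs} reduces both the Cartan-pair and equivariant-formality questions to the finitely many pairs $(G,H)$ defined by diagram automorphisms. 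These are symmetric (hence covered by \cite{goertsches}) except for the triality pair $(\operatorname{Spin}(8), G_2)$, where $H$ is known to be non-cohomologous to zero. The key lemma you are missing is precisely this equal-rank transfer (\cref{mt-proposition-relating-cohomology-of-equal-rank-pairs}), which simultaneously propagates the Cartan-pair property and equivariant formality between $K$ and $H$ once they share a maximal torus.
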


We refer the reader to \cref{mt-section-preliminaries} for a brief overview of the notion of Cartan pairs and equivariant formality. The proof of \cref{mt-theorem-main-theorem} is essentially performed in two steps. The first 
one, carried out in \cref{mt-section-proof-of-main-theorem-for-simple-lie-groups}, consists of a proof for those pairs $(G, K)$ for which $G$ is additionally assumed to be simple. The automorphisms on such Lie groups can be 
built by composing inner automorphisms and automorphisms of finite order, and the latter ones were classified by V. Ka\v{c} (see \cite[Section X.5]{helgason}). They give rise to so-called 
\inlinedef{generalized symmetric spaces}, for which formality in the sense of Sullivan (viz. being a Cartan pair) was verified by various authors, e.g. in \cite{kotschick} and \cite{stepien}. Using arguments from 
\cref{mt-section-equivariant-formality-of-isotropy-actions}, we conclude that in this case it suffices to consider pairs of Lie groups arising from automorphisms on the Dynkin diagram, and invoking, ultimately, the classification of 
these, we prove their equivariant formality in each case. Then, within the second step (see \cref{mt-section-proof-of-main-theorem}), we reduce the question of equivariant formality to the case considered in the first step, and 
conclude \cref{mt-theorem-main-theorem}.

\noindent\emph{Acknowledgements.} The following extends parts of the Master Thesis of the second named author, written at the University of Hamburg under the supervision of the first named author. We thank
Jeffrey Carlson for valuable discussions on a previous version and Aleksy Tralle for helpful remarks on generalized symmetric spaces and their formality.

	%%% --- P R E L I M I N A R I E S
	\section{Preliminaries}\label{mt-section-preliminaries}
\subsection{Equivariant formality}
Within the realms of compact manifolds $M$ admitting a (left-)action of a compact Lie group $K$, the \inlinedef{(real) equivariant cohomology} of $M$ can be defined to be the cohomology of the total complex obtained from
\inlinedef{Cartan's model}, a double complex whose degree $(p, q)$-term is given as
\centeredalign{
	\left(S^p(\liek^{\ast})\otimes\Omega^{q-p}(M)\right)^K.
}
Here, $S^p(\liek^{\ast})$ denotes the space of degree $p$ symmetric tensors, which we view as polynomials on the Lie algebra $\liek$ of $K$, and on which $K$ acts by the coadjoint representation; the action of $K$ on the
space $\Omega^{\ast}(M)$ of differential forms on $M$ is induced by pullback, and the superscript $K$ indicates we are only considering those elements in $S^p(\liek^{\ast})\otimes\Omega^{q-p}(M)$ which are invariant w.r.t. 
the action of $K$ induced by the respective actions on each of the factors. The action of $K$ on $M$ is then said to be \inlinedef{equivariantly formal} in case the spectral sequence obtained from the filtration of the Cartan
model by columns collapses at the $E^1$ stage, i.e. if the differential of the $r$-th page of the spectral sequence is trivial for all $r \geq 1$. Since in the sequel we will not be working with the actual definition, we refer 
the reader to \cite[Section 6.5]{guilleminSupersymmetry} for a discussion of equivariant cohomology in the Cartan model, especially its differentials, and to \cite[Appendix C]{guilleminMomentMaps} for the topological 
viewpoint. Here, we shall list some well-known equivalent characterizations of equivariant formality instead.	

\begin{proposition}[{\cite[Propositions C.24 and C.26]{guilleminMomentMaps}}]
	\label{mt-proposition-equivalent-characterizations-of-equivariant-formality}
	Let $K$ be a compact, connected Lie group acting on a compact manifold $M$ and $T \subseteq K$ a maximal torus. The following conditions are equivalent:
	\begin{enumerate}
		\item
		The action of $K$ on $M$ is equivariantly formal.
		
		\item
		The action of $T$ on $M$ is equivariantly formal.
		
		\item
		$\dim{H^{\ast}(M)} = \dim{H^{\ast}(M^T)}$.
	\end{enumerate}
\end{proposition}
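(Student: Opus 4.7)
The plan is to prove the three equivalences by leveraging two classical facts: the relationship between $K$-equivariant and $T$-equivariant cohomology via the Weyl group $W = N_K(T)/T$, and Borel's localization theorem for torus actions.

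For \textbf{(i) $\Leftrightarrow$ (ii)}, I would first recall that since $K$ is compact and connected, the natural map induces a $W$-equivariant isomorphism $H^{\ast}_T(M) \cong H^{\ast}_K(M) \otimes_{S^{\ast}(\liek^{\ast})^{\Ad_K}} S^{\ast}(\liet^{\ast})$, and dually $H^{\ast}_K(M) \cong H^{\ast}_T(M)^W$; this uses that the fiber $K/T$ of $EK \times_T M \to EK \times_K M$ has cohomology concentrated in even degrees, making the associated Serre spectral sequence degenerate. Combined with Chevalley's restriction theorem $S^{\ast}(\liek^{\ast})^{\Ad_K} \cong S^{\ast}(\liet^{\ast})^W$, freeness of $H^{\ast}_T(M)$ over $S^{\ast}(\liet^{\ast})$ transfers to freeness of $H^{\ast}_K(M)$ over $S^{\ast}(\liek^{\ast})^{\Ad_K}$ and vice versa, using that $W$ acts trivially on $H^{\ast}(M)$ because $K$ is connected (so $W$-invariants of a tensor product split nicely).

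For \textbf{(ii) $\Rightarrow$ (iii)}, I would argue as follows: equivariant formality for $T$ means $H^{\ast}_T(M) \cong H^{\ast}(M) \otimes S^{\ast}(\liet^{\ast})$ as a free $S^{\ast}(\liet^{\ast})$-module. Localizing at the quotient field $Q$ of $S^{\ast}(\liet^{\ast})$ and applying Borel's localization theorem, the restriction $H^{\ast}_T(M) \to H^{\ast}_T(M^T)$ becomes an isomorphism after tensoring with $Q$. Since $T$ acts trivially on $M^T$, we have $H^{\ast}_T(M^T) \cong H^{\ast}(M^T) \otimes S^{\ast}(\liet^{\ast})$, and comparing ranks over $Q$ gives $\dim H^{\ast}(M) = \dim H^{\ast}(M^T)$.

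For \textbf{(iii) $\Rightarrow$ (ii)}, the approach is to invoke the general inequality $\dim H^{\ast}(M^T) \leq \dim H^{\ast}(M)$, valid for any compact $T$-manifold, which arises from the spectral sequence of the Borel fibration $M \to M_T \to BT$ (or the filtration of the Cartan model): the $E_2$-page has dimension $\dim H^{\ast}(M) \cdot \dim S^{\ast}(\liet^{\ast})$, and differentials can only reduce rank. Equality in (iii) forces all higher differentials to vanish, yielding degeneration at $E_2$ and hence the freeness of $H^{\ast}_T(M)$.

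The \emph{main obstacle} is the (iii) $\Rightarrow$ (ii) direction, since it requires the careful spectral-sequence/rank-counting argument underlying Borel's localization, rather than a purely formal manipulation; the other steps reduce to standard homological algebra once the $W$-invariance identification and Chevalley's theorem are invoked. In practice, however, I would simply cite the corresponding results in \cite{guilleminMomentMaps} as the statement does.
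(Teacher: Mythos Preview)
The paper does not prove this proposition at all: it is stated with the citation to \cite[Propositions C.24 and C.26]{guilleminMomentMaps} and no argument is supplied, which is exactly what you yourself conclude in your final sentence. Your sketch of the underlying proof is the standard one and is essentially correct; the only imprecision worth noting is in the direction (iii) $\Rightarrow$ (ii), where speaking of the ``dimension'' of the $E_2$-page is ill-posed since $S^{\ast}(\liet^{\ast})$ is infinite-dimensional---the clean argument compares \emph{ranks} as $S^{\ast}(\liet^{\ast})$-modules (the differentials in the Borel/Cartan spectral sequence are $S^{\ast}(\liet^{\ast})$-linear, so a nonzero differential strictly drops the rank, while Borel localization pins the rank of $H^{\ast}_T(M)$ at $\dim H^{\ast}(M^T)$). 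With that adjustment your outline matches the proofs in the cited reference.
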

		
Throughout, we will only be considering cohomology with real coefficients, and $M^T$ denotes the fixed point set of the action of $T$. We remark that under the hypothesis of 
\cref{mt-proposition-equivalent-characterizations-of-equivariant-formality} above, $\dim H^{\ast}(M^T) \leq \dim H^{\ast}(M)$ always holds, due to Borel's Localization Theorem, see \cite[p. 46]{hsiang}. 

\subsection{Fixed point sets of isotropy actions}\label{mt-subsection-fixed-point-sets}
Specializing to the case of a pair $(G, K)$ of compact and connected Lie groups such that $K \subseteq G$ is a subgroup, we can give a more explicit description of the fixed point set of the action of a maximal torus
in $K$ induced by the isotropy action.
\begin{proposition}[{\cite[Lemma 4.3]{goertsches}}]
	\label{mt-proposition-fixed-point-set-of-torus}
	Let $K \subseteq G$ be two compact, connected Lie groups and $T_K \subseteq K$ a maximal torus. Then the fixed point set
	$(G/K)^{T_K}$ of the action of $T_K$ on $G/K$ induced by the isotropy action of $K$ is homeomorphic to $N_G(T_K)/N_K(T_K)$.
	In particular 
	\centeredalign{
		\dim{H^{\ast}((G/K)^{T_K})} &= \dim{H^{\ast}(N_G(T_K)/N_K(T_K))}.
	}
\end{proposition}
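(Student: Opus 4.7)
The plan is to identify the fixed points $(G/K)^{T_K}$ with a quotient of $N_G(T_K)$ via the obvious map sending an element $n \in N_G(T_K)$ to the coset $nK$, and then to show that this map is a $N_K(T_K)$-invariant, surjective continuous map inducing the claimed homeomorphism.

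First I would unravel the fixed point condition: a coset $gK \in G/K$ is fixed by $T_K$ under the isotropy action precisely when $tgK = gK$ for every $t \in T_K$, which is equivalent to $g^{-1} T_K g \subseteq K$. The subgroup $g^{-1}T_K g$ is then a torus inside $K$, and since all maximal tori in $K$ are conjugate there exists $k \in K$ with $k^{-1}(g^{-1}T_K g)k = T_K$; in other words, $gk \in N_G(T_K)$. Hence every fixed coset admits a representative in $N_G(T_K)$, so the natural map
\centeredalign{
\varphi \colon N_G(T_K) \longrightarrow (G/K)^{T_K}, \qquad n \longmapsto nK,
}
is well-defined (its image is clearly $T_K$-fixed since $n \in N_G(T_K)$) and surjective.

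Next I would compute the fibers of $\varphi$: for $n, n' \in N_G(T_K)$, the equality $nK = n'K$ means $n^{-1}n' \in K$, and since $N_G(T_K)$ is a subgroup we also have $n^{-1}n' \in N_G(T_K)$, so $n^{-1}n' \in N_G(T_K) \cap K$. The converse inclusion is immediate, and $N_G(T_K) \cap K = N_K(T_K)$ holds by definition. Hence $\varphi$ factors through a continuous bijection
\centeredalign{
\overline{\varphi} \colon N_G(T_K)/N_K(T_K) \longrightarrow (G/K)^{T_K}.
}

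Finally, both the source and the target of $\overline{\varphi}$ are compact Hausdorff (as closed subsets or quotients of the compact manifold $G/K$), so $\overline{\varphi}$ is automatically a homeomorphism. The dimension statement about real cohomology is then an immediate consequence. There is no serious obstacle here: the one step that requires a genuine (but standard) input is the conjugacy of maximal tori inside the compact connected group $K$, which is what allows us to replace an arbitrary fixed-point representative $g$ by one lying in $N_G(T_K)$.
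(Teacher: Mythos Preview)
The paper does not actually prove this proposition; it is simply quoted from \cite[Lemma 4.3]{goertsches} without argument, so there is no in-paper proof to compare against. Your argument is correct and is essentially the standard one that one finds in that reference: identify fixed cosets as those $gK$ with $g^{-1}T_Kg \subseteq K$, use conjugacy of maximal tori in $K$ to pick a representative in $N_G(T_K)$, and check that the resulting map $N_G(T_K) \to (G/K)^{T_K}$ has fibers exactly $N_K(T_K)$.

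One small imprecision worth tightening: you write that $g^{-1}T_Kg$ is ``a torus inside $K$'' and then invoke conjugacy of maximal tori. For the conjugation $k^{-1}(g^{-1}T_Kg)k = T_K$ to exist you need $g^{-1}T_Kg$ to be a \emph{maximal} torus of $K$, not just a torus; this is of course automatic since $\dim g^{-1}T_Kg = \dim T_K = \operatorname{rank} K$, but it should be said. With that one word added, the proof is complete.
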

% \begin{proof}
% 	Consider the map $\pi:N_G(T_K) \rightarrow G/K$, $g \mapsto g.eK = gK$. The isotropy subgroup of the $N_G(T_K)$-action on $G/K$ at the point $eK$ is the
% 	set $N_G(T_K) \cap K = N_K(T_K)$, and since $N_G(T_K)/N_K(T_K)$ is compact and $G/K$ a Hausdorff space, the orbit map induces a homeomorphism of $N_G(T_K)/N_K(T_K)$ onto the image of $\pi$.
% 	Now for every $g \in N_G(T_K)$ we have $T_KgK = gg^{-1}T_KgK = gK$, and thus $\im(\pi) \subseteq (G/K)^{T_K}$. Conversly, let $gK \in (G/K)^{T_K}$ be arbitrary. Then
% 	\centeredalign{
% 		T_KgK = gK &\Longleftrightarrow g^{-1}T_Kg = eK \Longleftrightarrow g^{-1}T_Kg \subseteq K,
% 	}
% 	and because $T_K$ is a maximal torus of $K$, so must be $g^{-1}T_Kg$. According to the Theorem of maximal tori, there hence exists $k \in K$ with $k^{-1}g^{-1}T_Kgk = T_K$, and thus
% 	$gk$ is a representative of $gkK = gK$ which is contained in $N_G(T_K)$. Therefore, $\im(\pi) = (G/K)^{T_K}$.
% \end{proof}

Notice that the connected component containing the identity coset $p = eN_K(T_K)$ is exactly the orbit $Z_G(T_K)\cdot p$ of the canonical action of the centralizer $Z_G(T_K)$ on $N_G(T_K)/N_K(T_K)$. The isotropy subgroup of 
this action at $p$ is equal to $N_K(T_K) \cap Z_G(T_K) = Z_K(T_K)$, and because $T_K$ is a maximal torus in $K$, we have $Z_K(T_K) = T_K$. Thus, we have identified the identity component of $N_G(T_K)/N_K(T_K)$ as 
$Z_G(T_K)/T_K$. In view of \cref{mt-proposition-equivalent-characterizations-of-equivariant-formality}, we hence are left with determining the dimension of $H^{\ast}(Z_G(T_K)/T_K)$ and the number of components of 
$N_G(T_K)/N_K(T_K)$. The latter problem can be adressed using the following \begin{proposition}
	\label{mt-proposition-number-of-components-of-homogeneous-space}
	Let $K \subseteq G$ be two compact Lie groups. Then
	\centeredalign{
		\dim{H^0(G)} &= \dim{H^0(G/K)}\frac{\dim{H^0(K)}}{\dim{H^0(G_0\cap K)}};
	}
	the dimension of $H^0(G_0\cap K)$ is equal to the number of connected components of $K$ which are contained in the component $G_0$ of $G$ containing the identity element.
\end{proposition}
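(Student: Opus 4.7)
The plan is as follows. Since $\dim H^{0}(X) = |\pi_0(X)|$ whenever $X$ has finitely many components, the claimed identity amounts to the counting statement
\[
|\pi_0(G)| \cdot |\pi_0(G_0 \cap K)| \;=\; |\pi_0(G/K)| \cdot |\pi_0(K)|,
\]
together with the identification of $|\pi_0(G_0 \cap K)|$ with the number of components of $K$ lying in $G_0$. The second assertion is essentially a tautology: $G_0$ is clopen in $G$, so $G_0 \cap K$ is clopen in $K$, hence a disjoint union of connected components of $K$ — precisely those meeting, and therefore contained in, $G_0$.

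For the main identity I rewrite every factor as the index of a subgroup. Since $G_0$ is an open normal subgroup of $G$, $|\pi_0(G)| = [G:G_0]$, and similarly $|\pi_0(K)| = [K:K_0]$. Because $K_0$ is connected and contains the identity we have $K_0 \subseteq G_0 \cap K$, and since $G_0 \cap K$ is open in $K$ it is a union of cosets of $K_0$; hence $|\pi_0(G_0 \cap K)| = [(G_0 \cap K):K_0]$.

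The crucial step is to identify $|\pi_0(G/K)|$. Set $H := G_0 K$: this is a subgroup because $G_0$ is normal in $G$, and it is open because it is a union of translates of $G_0$. The inclusion $K \subseteq H$ induces a continuous, open, surjective map
\[
p \colon G/K \longrightarrow G/H,
\]
whose target is finite and discrete. Each fibre of $p$ is a translate of $H/K \cong G_0/(G_0 \cap K)$, and the latter, being the continuous image of the connected group $G_0$, is connected. Therefore the fibres of $p$ are exactly the connected components of $G/K$, giving $|\pi_0(G/K)| = [G:G_0 K]$.

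Combining these observations via the elementary product formula $[G:G_0] = [G:G_0 K]\,[G_0 K:G_0]$ and the second isomorphism theorem $G_0 K / G_0 \cong K/(G_0 \cap K)$, one obtains
\[
|\pi_0(G)| \;=\; |\pi_0(G/K)| \cdot [K:G_0 \cap K] \;=\; |\pi_0(G/K)| \cdot \frac{|\pi_0(K)|}{|\pi_0(G_0 \cap K)|},
\]
which is the desired formula. The only step requiring genuine care is confirming that the fibres of $p$ really do exhaust the connected components of $G/K$; this is the principal obstacle, but it reduces immediately to the fact that $G_0 K$ is an open subgroup of $G$ and $G_0/(G_0\cap K)$ is connected.
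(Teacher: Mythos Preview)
Your argument is correct. The paper, however, states this proposition without proof, so there is nothing to compare against; your approach---identifying each $\dim H^0$ term with a subgroup index, recognising $|\pi_0(G/K)| = [G:G_0K]$ via the open projection $G/K \to G/(G_0K)$ with connected fibres $G_0/(G_0\cap K)$, and then invoking the multiplicativity of indices together with the second isomorphism theorem---is a clean and standard way to establish the formula. The only point worth making explicit is that $K_0$ is open in $G_0\cap K$ (being open in $K$), so it really is the identity component of $G_0\cap K$ and hence $|\pi_0(G_0\cap K)| = [(G_0\cap K):K_0]$; you use this implicitly and it holds.
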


\subsection{Cohomology of homogeneous spaces}\label{mt-subsection-cohomology-homogeneous-spaces}
We still consider a pair of compact, connected Lie groups $(G, K)$ with $K$ a subgroup of $G$, and wish to collect from \cite{greub} some facts about the cohomology of the homogeneous space $G/K$. In 
$H^{+}(G) \subseteq H^{\ast}(G)$ we have the \inlinedef{primitive subspace} $P_G$, whose elements $\omega \in P_G$ are those on which the map 
$m^{\ast}:H^{\ast}(G)\otimes{}H^{\ast}(G) \cong H^{\ast}(G\times G) \rightarrow H^{\ast}(G)$ induced by the group multiplication $m:G\times G \rightarrow G$ takes the form 
\centeredalign{
	m^{\ast}(\omega) &= \omega\otimes{}1 + 1\otimes\omega.
}
Primitive elements have odd degree and constitute a graded subspace whose dimension is equal to the rank of $G$; moreover, the inclusion induces an algebra isomorphism $\wedge{}P_G \cong H^{\ast}(G)$ (cf.
\cite[Lemma VII and Theorem III, Section 5.18]{greub}). The cohomology algebra $H^{\ast}(G/K)$ then can be expressed as (cf. \cite[Theorem III, Section 11.5]{greub})
\centeredalign{
	H^{\ast}(G/K) &\cong A\otimes\wedge P_{(G, K)},
}
with $A \subseteq H^{\ast}(G/K)$ a graded subalgebra (dubbed \inlinedef{characteristic factor} in \cite{greub}) and $P_{(G, K)} \subseteq P_G$ the \inlinedef{Samelson subspace for the pair $(G, K)$}. This is the space 
consisting of all primitive elements which are representable by \inlinedef{$K$-basic forms}, that is, which lie in the image of the cohomology map $\pi^{\ast}:H^{\ast}(G/K) \rightarrow H^{\ast}(G)$ induced by the quotient map 
$\pi:G \rightarrow G/K$. In favorable situations, when $(G, K)$ is a \inlinedef{Cartan pair}, also the subalgebra $A$ can be identified explicitly, namely as the image of the Weil homomorphism 
$\omega_{(G, K)}:S^{\ast}(\liek)^{\Ad_K} \rightarrow H^{\ast}(G/K)$ associated to the principal $K$-bundle $\pi:G \rightarrow G/K$. By definition, $(G, K)$ is a Cartan pair if 
\centeredalign{ 
	\dim{P_K} + \dim{P_{(G, K)}} = \dim{P_G};
}
the left side is always bounded above by $\dim{P_G}$, see \cite[Theorem IV, Section 11.5]{greub}.

As is shown in \cite[Theorem IV, Section 11.5]{greub} as well, the notion of $(G, K)$ being a Cartan pair coincides with the more general notion of the space $G/K$ to be formal in the sense of Sullivan, and as such has been 
studied by various authors, cf. \cite{kotschick, stepien}. Our interest in Cartan pairs mostly stems from the fact that the dimension of their cohomology is rather well-understood:	
\begin{theorem}[{\cite[Theorem VI and Corollary, Section 10.15]{greub}}]
	\label{mt-theorem-dimension-of-image-of-weil-homomorphism}
	Suppose $(G, K)$ is a Cartan pair and denote by
	\centeredalign{
		\sum_{j=1}^{\rank{G}}t^{g_j}, \, \sum_{j=\rank{K} + 1}^{\rank{G}}t^{g_j} \text{ and } \sum_{j=1}^{\rank{K}}t^{l_j}
	}
	the Poincar\'e polynomials of $P_G$, $P_{(G, K)}$ and $P_K$, respectively. Then
	\centeredalign{
		\dim\im(\omega_{(G, K)}) &= \prod_{j=1}^{\rank{K}}\frac{g_j + 1}{l_j + 1}.
	}
\end{theorem}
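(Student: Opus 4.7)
The plan is to identify $A := \im\omega_{(G,K)}$ with an explicit quotient of a polynomial ring and then read off its Poincar\'e polynomial. The Weil homomorphism of the principal $K$-bundle $\pi \colon G \to G/K$ fits into a commutative diagram with the Chevalley restriction map $\rho \colon S^{*}(\lieg^{*})^{\Ad_G} \to S^{*}(\liek^{*})^{\Ad_K}$ of invariant polynomials; a standard computation in the Cartan model identifies
\centeredalign{
	A \cong S^{*}(\liek^{*})^{\Ad_K} \big/ J,
}
where $J$ is the ideal generated by $\rho(S^{+}(\lieg^{*})^{\Ad_G})$.

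By Chevalley's restriction theorem, $S^{*}(\lieg^{*})^{\Ad_G}$ and $S^{*}(\liek^{*})^{\Ad_K}$ are polynomial algebras on $\rank{G}$ and $\rank{K}$ generators of degrees $g_1 + 1, \ldots, g_{\rank{G}} + 1$ and $l_1 + 1, \ldots, l_{\rank{K}} + 1$ respectively, the shift by one coming from transgression. I would then show that the Cartan pair hypothesis $\dim P_K + \dim P_{(G,K)} = \dim P_G$ translates, on the level of invariant polynomials, into the existence of homogeneous generators $P_1, \ldots, P_{\rank{G}}$ of $S^{+}(\lieg^{*})^{\Ad_G}$ with $\deg P_j = g_j + 1$, indexed so that $P_{\rank{K}+1}, \ldots, P_{\rank{G}}$ correspond to the Samelson subspace, such that $\rho(P_1), \ldots, \rho(P_{\rank{K}})$ form a regular sequence in $S^{*}(\liek^{*})^{\Ad_K}$, while $\rho(P_j)$ for $j > \rank{K}$ already lies in the ideal these generate. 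Consequently $J$ is a complete intersection of height $\rank{K}$.

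With this structural input in hand, the Koszul resolution yields
\centeredalign{
	P(A, t) = \prod_{j=1}^{\rank{K}} \frac{1 - t^{g_j + 1}}{1 - t^{l_j + 1}},
}
and evaluating at $t = 1$ via $(1 - t^n)/(1 - t^m) \to n/m$ gives $\dim A = \prod_{j=1}^{\rank{K}} (g_j + 1)/(l_j + 1)$, as claimed. I expect the main obstacle to be the middle step, namely verifying that the Cartan pair condition really does produce the regular sequence: this rests on the transgression theorem connecting $P_G$ with $\Ad_G$-invariant polynomials on $\lieg$, on the characterization of $P_{(G,K)}$ via the kernel of the appropriate transgression--restriction composite, and on a careful dimension count ensuring that exactly $\rank{K}$ of the images $\rho(P_j)$ suffice to generate $J$ regularly.
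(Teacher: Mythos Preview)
The paper does not give its own proof of this statement: it is quoted as \cite[Theorem VI and Corollary, Section 10.15]{greub} in the preliminaries section and used as a black box. So there is no in-paper argument to compare against.

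That said, your sketch is essentially the standard proof one finds in Greub's treatment. The identification $\im\omega_{(G,K)}\cong S^{\ast}(\liek^{\ast})^{\Ad_K}/J$ with $J$ generated by $\rho(S^{+}(\lieg^{\ast})^{\Ad_G})$ holds in general (one checks in the Koszul--Cartan complex $(S^{\ast}(\liek^{\ast})^{\Ad_K}\otimes\wedge P_G,d)$ that an element of $S^{\ast}(\liek^{\ast})^{\Ad_K}\otimes 1$ is exact iff it lies in $J$), and the Poincar\'e polynomial computation via the Koszul resolution is exactly right once the regular sequence is in place. Two remarks on the step you flag as the main obstacle: first, the Samelson condition in the form of the paper's \cref{mt-theorem-transgression} only says $\rho(\tau(\omega_j))\in\im(\imath^{\ast}\circ\tau)\cdot S^{+}(\liek^{\ast})^{\Ad_K}$ for $j>\rank K$, so a short degree induction is needed to conclude these lie in the ideal generated by the first $\rank K$ images alone; second, the cleanest way to get regularity of $\rho(P_1),\ldots,\rho(P_{\rank K})$ is to note that $A$ is finite-dimensional (since $G/K$ is compact), hence these $\rank K$ elements form a system of parameters in the Cohen--Macaulay ring $S^{\ast}(\liek^{\ast})^{\Ad_K}$ and are therefore a regular sequence. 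With those two points made precise your argument is complete.
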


For simple Lie groups $G$ the numbers $g_1, \ldots, g_{\rank{G}}$ are known as the \inlinedef{(primitive) exponents}, and the homogeneous primitive elements of degree $g_i$ correspond under transgression (see 
\cref{mt-theorem-transgression}) to generators of degree $(g_i + 1)/2$ of the polynomials invariant under the action of the Weyl group on a choice of a Cartan subalgebra for $\lieg$. In general, the 
product of the integers $(g_i + 1)/2$ is known to be equal to the order of  the Weyl group of $G$, cf. \cite[Corollary I, Section 11.7]{greub}. Thus, if $(G, K)$ is an equal rank pair, we retrieve the familiar formula 
$\dim{H^{\ast}(G/K)} = |W(G)|/|W(K)|$. 

We wish to highlight yet another class of Lie group pairs, the Cartan pairs for which the fiber inclusion $K \hookrightarrow G$ of the bundle $G \rightarrow G/K$ induces a surjection in cohomology: 
\begin{theorem}[{\cite[Theorems IX, Section 10.18, Theorem X, Section 10.19 and Theorem VI, Section 11.6]{greub}}]
	\label{mt-theorem-equivalent-characterizations-of-being-non-cohomologous-to-zero}
	Let $(G, K)$ be a pair of compact and connected Lie groups, $K \subseteq G$. The following conditions are equivalent:
	\begin{enumerate}
		\item
		$H^{\ast}(G) \rightarrow H^{\ast}(K)$ is surjective.
		
		\item
		The Weil homomorphism $\omega_{(G, K)}$ is trivial: $\omega_{(G, K)}(S^{+}(\liek^{\ast})^{\Ad_K}) = 0$.
		
		\item
		$\dim{H^{\ast}(G)} = \dim{H^{\ast}(G/K)}\cdot\dim{H^{\ast}(K)}$.
		
		\item
		The restriction map $S^{\ast}(\lieg^{\ast})^{\Ad_G} \rightarrow S^{\ast}(\liek^{\ast})^{\Ad_K}$ is surjective.
		
		\item
		There is an isomorphism of graded spaces $P_G \cong P_K\oplus{}P_{(G, K)}$.
	\end{enumerate}
\end{theorem}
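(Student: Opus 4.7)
The strategy is to exploit two structural facts: the Serre spectral sequence of the principal bundle $K \hookrightarrow G \to G/K$, and the identification of its transgression with the Weil homomorphism. More precisely, for any odd-degree primitive $x \in P_K$ of degree $2n-1$, the Koszul transgression $\tau_K : P_K \to S^n(\liek^*)^{\Ad_K}$ maps $x$ to a generator of the invariant polynomial algebra, and the transgression of $x$ in the Serre spectral sequence coincides (up to the appropriate quotient by images of earlier differentials) with $\omega_{(G,K)}(\tau_K(x)) \in H^{2n}(G/K)$. This identity is the common hinge on which all equivalences turn.

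The cluster (i) $\Leftrightarrow$ (iii) $\Leftrightarrow$ (v) rests entirely on the spectral sequence of $K \hookrightarrow G \to G/K$. Surjectivity of $H^*(G) \to H^*(K)$ holds if and only if every primitive generator of $H^*(K) = \wedge P_K$ survives to $E_\infty$, i.e.\ all transgressions out of $E_2^{0,*}$ vanish; since $H^*(K)$ is generated by $P_K$ as an algebra, the Leibniz rule then forces every higher differential to vanish too, so $E_\infty = E_2 = H^*(G/K) \otimes H^*(K)$, yielding (iii). Conversely, (iii) forces the same collapse for dimensional reasons, since $\dim E_\infty \le \dim E_2$ always. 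For (i) $\Leftrightarrow$ (v), the nontrivial implication is a standard Hopf algebra argument: any preimage in $H^*(G)$ of a primitive $x \in P_K$ can be corrected, by subtracting its decomposable part as determined by the coproduct, to an element of $P_G$ mapping to $x$; the kernel of the resulting surjection $P_G \twoheadrightarrow P_K$ consists exactly of those primitives of $G$ that vanish upon restriction to $K$, which by definition is $P_{(G,K)}$.

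The Weil homomorphism side is handled similarly: since $\tau_K(P_K)$ generates $S^+(\liek^*)^{\Ad_K}$ as an algebra, $\omega_{(G,K)}$ vanishes on $S^+(\liek^*)^{\Ad_K}$ if and only if it vanishes on $\tau_K(P_K)$, and by the transgression identity this is equivalent to (i), giving (i) $\Leftrightarrow$ (ii). For the last equivalence (ii) $\Leftrightarrow$ (iv), which I expect to be the main technical obstacle, I would invoke the Cartan-type presentation
\centeredalign{
H^*(G/K) &\cong H\bigl(S^*(\liek^*)^{\Ad_K} \otimes \wedge P_G,\, d\bigr),
}
in which $d(1 \otimes x) = \mathrm{res}(\tau_G(x)) \otimes 1$ for primitives $x \in P_G$, with $\mathrm{res}:S^*(\lieg^*)^{\Ad_G} \to S^*(\liek^*)^{\Ad_K}$ the restriction map, and $\omega_{(G,K)}$ is realized by the canonical inclusion of $S^*(\liek^*)^{\Ad_K}$ as the left tensor factor. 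If $\mathrm{res}$ is surjective, the generators of $S^+(\liek^*)^{\Ad_K}$ lie in the image of $d$ and become zero in cohomology, giving (ii); conversely, if $\omega_{(G,K)}$ is trivial in positive degrees, a degree induction shows every invariant polynomial is cohomologous in this complex to something in $\mathrm{res}(S^*(\lieg^*)^{\Ad_G})$, forcing $\mathrm{res}$ to be onto. Unlike the largely formal Hopf algebra and spectral sequence manipulations used elsewhere, this last step genuinely requires the Cartan model for $H^*(G/K)$ and careful tracking of how $\omega_{(G,K)}$ and $\mathrm{res}$ interact through the transgression of primitives of $G$.
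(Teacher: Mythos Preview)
The paper does not give its own proof of this theorem: it is stated with an explicit citation to \cite[Theorems IX, Section 10.18, Theorem X, Section 10.19 and Theorem VI, Section 11.6]{greub} and then used as a black box. There is therefore nothing in the paper to compare your argument against.

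Since you have written a proof sketch anyway, a brief comment on its content. Your spectral-sequence argument for (i) $\Leftrightarrow$ (iii) and the transgression link (i) $\Leftrightarrow$ (ii) are standard and correct in outline, as is the use of the Cartan model for (ii) $\Leftrightarrow$ (iv). The one point that needs repair is in your treatment of (i) $\Rightarrow$ (v): you write that the kernel of the surjection $P_G \twoheadrightarrow P_K$ ``consists exactly of those primitives of $G$ that vanish upon restriction to $K$, which by definition is $P_{(G,K)}$.'' That is not the definition. In the paper (and in \cite{greub}), $P_{(G,K)}$ is the subspace of $P_G$ lying in the image of $\pi^\ast\colon H^\ast(G/K)\to H^\ast(G)$, not the kernel of restriction to $K$. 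The two subspaces do coincide under hypothesis (i), but this is a consequence of the spectral-sequence collapse (the image of the base edge map equals the kernel of the fibre edge map when $E_2 = E_\infty$), not a matter of definition. You should make that step explicit; otherwise the implication (i) $\Rightarrow$ (v) is circular, since in general one only knows $\dim P_{(G,K)} + \dim P_K \le \dim P_G$.
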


Finally, we would like to point out that the cohomology of $G/K$ is entirely determined by the Lie algebra pair $(\lieg, \liek)$. In fact, the majority of results cited before were proven in \cite{greub} by transferring
the corresponding result in the space of (basic) alternating forms on $\lieg$ onto the group level via $\pi$. In particular, the spaces $P_G$, $P_K$ and $P_{(G, K)}$ have isomorphic counterparts which are intrinsically defined
on the level of Lie algebras (see \cite[Sections 5.18 and 10.4]{greub}), and $H^{\ast}(G/K)$ is isomorphic to the cohomology algebra $H^{\ast}_{bas}(\lieg)$ of $\liek$-basic forms on $\lieg$, cf. 
\cite[Proposition I, Section 11.1]{greub}. 
	
	%%% --- E Q U I V A R I A N T   F O R M A L I T Y   O F   I S O T R O P Y   A C T I O N S
	\section{Equivariant formality of isotropy actions}\label{mt-section-equivariant-formality-of-isotropy-actions}
Combining the results about the fixed point set (\cref{mt-subsection-fixed-point-sets}) with those about the cohomology of homogeneous spaces allows us to rewrite the characterization of equivariant formality of the isotropy
action via the dimension of the fixed point set (\cref{mt-proposition-equivalent-characterizations-of-equivariant-formality}). In this section we shall show that this implies equivariant formality of the isotropy action to only
depend on the Lie algebra pair and how to relate the equivariant formality of subgroups of equal rank. These observations are motivated by the arguments found in \cite{goertsches} where the corresponding results were already
obtained for the case of symmetric spaces, cf. \cite[Proposition 4.6, Lemma 4.18 and Proposition 4.24]{goertsches}.
\begin{proposition}
	\label{mt-proposition-dimension-of-fixed-point-set-of-torus}
	Suppose $K \subseteq G$ are compact and connected Lie groups, and choose a maximal torus $T_K \subseteq K$. Then
	\centeredalign{
		\dim{H^{\ast}((G/K)^{T_K})} &= 2^{\rank{G} - \rank{K}}\cdot\dim{H^0((G/K)^{T_K})}
	%}
	%and
	%\centeredalign{
		\intertext{and}
		\dim{H^0((G/K)^{T_K})} &= \dim{H^0(N_G(T_K))}/|W(K)|,%\frac{\dim{H^0(N_G(T_K))}}{|W(K)|},
	}
	where $W(K) = N_K(T_K)/T_K$ is the (analytic) Weyl group of $K$.
\end{proposition}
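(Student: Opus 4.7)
The plan is to reduce both identities to applications of \cref{mt-proposition-fixed-point-set-of-torus} and \cref{mt-proposition-number-of-components-of-homogeneous-space}, combined with the standard structural description of the centralizer and normalizer of a torus in a compact connected Lie group.

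First I would use \cref{mt-proposition-fixed-point-set-of-torus} to identify $(G/K)^{T_K}$ with $N_G(T_K)/N_K(T_K)$, and appeal to the discussion immediately preceding the statement to identify its identity component with $Z_G(T_K)/T_K$. Since $N_G(T_K)/N_K(T_K)$ is a homogeneous space, left translation by a representative of each component provides a diffeomorphism with the identity component; hence all components share the same Poincar\'e polynomial and
\begin{align*}
\dim{H^{\ast}((G/K)^{T_K})} &= \dim{H^0((G/K)^{T_K})}\cdot\dim{H^{\ast}(Z_G(T_K)/T_K)}.
\end{align*}
The first identity then reduces to computing $\dim{H^{\ast}(Z_G(T_K)/T_K)}$. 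Since $T_K$ is a torus in the compact connected Lie group $G$, the group $Z_G(T_K)$ is compact, connected, contains a maximal torus of $G$ (so $\rank{Z_G(T_K)} = \rank{G}$), and has $T_K$ in its center. Consequently $Z_G(T_K)/T_K$ is again a compact connected Lie group, of rank $\rank{G} - \rank{K}$. The general fact $H^{\ast}(L) \cong \wedge P_L$ with $\dim{P_L} = \rank{L}$, recalled in \cref{mt-subsection-cohomology-homogeneous-spaces}, then immediately yields $\dim{H^{\ast}(Z_G(T_K)/T_K)} = 2^{\rank{G} - \rank{K}}$.

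For the second identity, I would apply \cref{mt-proposition-number-of-components-of-homogeneous-space} to the pair of compact Lie groups $N_K(T_K) \subseteq N_G(T_K)$. Here $(N_G(T_K))_0 = Z_G(T_K)$, because $Z_G(T_K)$ is connected and $N_G(T_K)/Z_G(T_K)$ embeds into the discrete group $\Aut(T_K)$; intersecting with $N_K(T_K)$ gives $Z_G(T_K) \cap N_K(T_K) = Z_K(T_K) = T_K$, which is connected, whereas $\dim{H^0(N_K(T_K))} = |N_K(T_K)/(N_K(T_K))_0| = |N_K(T_K)/T_K| = |W(K)|$ since likewise $(N_K(T_K))_0 = Z_K(T_K) = T_K$. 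Substituting these data into the formula of \cref{mt-proposition-number-of-components-of-homogeneous-space} and rearranging produces the asserted expression for $\dim{H^0((G/K)^{T_K})}$.

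The only subtlety lies in the bookkeeping of the various centralizers and normalizers of $T_K$; the ingredients invoked—connectedness of centralizers of tori in compact connected Lie groups, discreteness of $N_G(T_K)/Z_G(T_K)$, and the maximality of $T_K$ in $K$—are all standard consequences of the maximal torus theorem and present no serious obstacle.
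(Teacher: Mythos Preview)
Your proposal is correct and follows essentially the same approach as the paper's proof: both identify the identity component as $Z_G(T_K)/T_K$, compute its total Betti number as $2^{\rank G - \rank K}$ via the Lie group structure, and obtain the component count from \cref{mt-proposition-number-of-components-of-homogeneous-space} using $N_G(T_K)_0 \cap N_K(T_K) = T_K$ and $|N_K(T_K)/T_K| = |W(K)|$. You are merely more explicit about justifying $(N_G(T_K))_0 = Z_G(T_K)$ and the diffeomorphism of the components, which the paper leaves implicit.
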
	
\begin{proof}
	Recall (cf. the remark before \cref{mt-proposition-number-of-components-of-homogeneous-space}) that $N_G(T_K)_0 \cap N_K(T_K) = T_K$. By \cref{mt-proposition-number-of-components-of-homogeneous-space} 
	the number of components of $(G/K)^{T_K} = N_G(T_K)/N_K(T_K)$ (see \cref{mt-proposition-fixed-point-set-of-torus}) thus computes as
	\centeredalign{
		\dim{H^0((G/K)^{T_K})}	&= \dim{H^0(N_G(T_K))}/\dim{H^0(N_K(T_K))} \\
								&= \dim{H^0(N_G(T_K))}/|W(K)|, 
	}
	the last equation holding due to $W(K) = N_K(T_K)/T_K$ being a finite set and $T_K$ being connected. This shows the second equation. For the first, it suffices to observe that $T_K$ is entirely contained in the center of 
	$Z_G(T_K)$, so the identity component $Z_G(T_K)/T_K$ of the homogeneous space $N_G(T_K)/N_K(T_K)$ is again a Lie group of rank $\rank{G} - \rank{K}$ and $\dim{H^{\ast}(Z_G(T_K)/T_K)} = 2^{\rank{G} - \rank{K}}$.
\end{proof}		
\begin{corollary}
	\label{mt-corollary-equivariant-formality-only-depends-on-lie-algebra-pair}
	For $i \in \{1, 2\}$ let $K_i \subseteq G_i$ be compact and connected Lie groups. If there exists a Lie algebra isomorphism $\Phi:\lieg_1 \rightarrow \lieg_2$ with
	$\Phi(\liek_1) = \liek_2$ then the isotropy action of $K_1$ on $G_1/K_1$ is equivariantly formal if and only if the isotropy action of $K_2$ on $G_2/K_2$ is equivariantly formal.
\end{corollary}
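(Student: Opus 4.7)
My plan is to reduce equivariant formality to a numerical condition which is manifestly a Lie-algebra invariant. By \cref{mt-proposition-equivalent-characterizations-of-equivariant-formality}, the isotropy action of $K_i$ on $G_i/K_i$ is equivariantly formal if and only if $\dim H^{\ast}(G_i/K_i)=\dim H^{\ast}((G_i/K_i)^{T_{K_i}})$, so it suffices to show that each of the two dimensions depends only on $(\lieg_i,\liek_i)$. The left-hand side is an invariant of the Lie algebra pair by the closing remark of \cref{mt-subsection-cohomology-homogeneous-spaces}, which identifies $H^{\ast}(G/K)$ with the cohomology $H^{\ast}_{bas}(\lieg)$ of $\liek$-basic forms on $\lieg$.

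For the right-hand side I would invoke \cref{mt-proposition-dimension-of-fixed-point-set-of-torus}, which gives
\[
\dim H^{\ast}((G/K)^{T_K}) = 2^{\rank{G}-\rank{K}}\cdot\dim H^0(N_G(T_K))/|W(K)|.
\]
The factor $2^{\rank{G}-\rank{K}}$ and the denominator $|W(K)|$ are plainly determined by $\lieg$ and $\liek$. As $Z_G(T_K)$ is connected, being the centralizer of a torus in the compact connected Lie group $G$, the remaining factor $\dim H^0(N_G(T_K))$ equals the order of the finite group $N_G(T_K)/Z_G(T_K)$.

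The crux is therefore to exhibit $|N_G(T_K)/Z_G(T_K)|$ as a Lie-algebra invariant. My plan is to pick a Cartan subalgebra $\liet\subseteq\lieg$ extending $\liet_K:=\Lie{T_K}$, corresponding to a maximal torus $T\supseteq T_K$ of $G$. Since $T$ is then a maximal torus of the compact connected group $Z_G(T_K)$ and any two such are $Z_G(T_K)$-conjugate, a standard argument yields
\[
N_G(T_K)/Z_G(T_K) \;\cong\; W_{T_K}(G)/W^0_{T_K}(G),
\]
where $W_{T_K}(G)=\{w\in W(G):w(\liet_K)=\liet_K\}$ and $W^0_{T_K}(G)=\{w\in W(G):w|_{\liet_K}=\id\}$. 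To transport this from $(G_1,K_1)$ to $(G_2,K_2)$, I would first compose $\Phi$ with an inner automorphism of $K_2$ to arrange $\Phi(\liet_{K_1})=\liet_{K_2}$, and then take $\liet_2:=\Phi(\liet_1)$; the resulting $\Phi$ carries the root system of $(\lieg_1,\liet_1)$ onto that of $(\lieg_2,\liet_2)$ compatibly with the distinguished subspaces $\liet_{K_i}\subseteq\liet_i$, so the two quotients are isomorphic. I expect the only non-routine step to be the identification $N_G(T_K)/Z_G(T_K)\cong W_{T_K}(G)/W^0_{T_K}(G)$; everything else amounts to bookkeeping built on standard facts about maximal tori of compact Lie groups.
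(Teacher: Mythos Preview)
Your reduction is identical to the paper's: both invoke \cref{mt-proposition-equivalent-characterizations-of-equivariant-formality} and \cref{mt-proposition-dimension-of-fixed-point-set-of-torus} to reduce to showing that $|N_G(T_K)/Z_G(T_K)|$ depends only on $(\lieg,\liek)$, after noting that $\dim H^{\ast}(G/K)$, $\rank G - \rank K$ and $|W(K)|$ are Lie-algebra invariants. The divergence is only in how this last quantity is handled.

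The paper does not pass through a maximal torus $T\supseteq T_K$ of $G$ at all. Instead it uses surjectivity of $\exp$ on the compact connected group $G$ to define directly a surjective homomorphism
\[
N_{G}(T_{K}) \longrightarrow H := \bigl\{\,F\in\Aut(\liet_K)\;\bigm|\;\exists\,X\in\lieg:\ e^{\ad_X}\!\restriction_{\liet_K}=F\,\bigr\},\qquad \exp(X)\mapsto e^{\ad_X}\!\restriction_{\liet_K},
\]
with kernel $Z_G(T_K)$, so that $|N_G(T_K)/Z_G(T_K)|=|H|$; and $H$ is visibly transported by $\Phi$ via conjugation. Your route realises the same quotient as $W_{T_K}(G)/W^{0}_{T_K}(G)$ inside the Weyl group of an ambient maximal torus, then transports root systems. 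This is correct (the conjugacy of maximal tori in $Z_G(T_K)$ yields the identification, and $W^{0}_{T_K}(G)$ is exactly $W(Z_G(T_K))$ since $T_K$ is connected), but it introduces auxiliary structure the paper avoids. The payoff of your approach is a link to standard root-system machinery; the payoff of the paper's is brevity, since it needs neither a choice of $T\supseteq T_K$ nor the ``relative Weyl group'' argument.
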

\begin{proof}
	Let $T_{K_i} \subseteq K_i$ be maximal tori such that $\Phi(\liet_{\liek_1}) = \liet_{\liek_2}$. Then by 
	\cref{mt-proposition-equivalent-characterizations-of-equivariant-formality,mt-proposition-dimension-of-fixed-point-set-of-torus} the action of $K_i$ on $G_i/K_i$ is equivariantly formal iff
	\centeredalign{
		\tag{$\ast$}
		\dim{H^{\ast}(G_i/K_i)} &= 2^{\rank{G_i} - \rank{K_i}}\cdot\dim{H^0(N_{G_i}(T_{K_i}))}/|W(K_i)|.
	}
	Now, we have already remarked in \cref{mt-subsection-cohomology-homogeneous-spaces} the cohomology $H^{\ast}(G_i/K_i)$ to be isomorphic to the $\liek_i$-basic cohomology 
	$H^{\ast}_{\liek_i\text{-}bas}(\lieg_i)$, whose dimension is invariant under Lie algebra isomorphisms. Thus, the only factor in ($\ast$) that a priori might depend on the structure of $G_i$ and $K_i$ is 
	$\dim{H^0(N_{G_i}(T_{K_i}))}$. However, since $G_i$ is compact and connected, the exponential map of $G_i$ is surjective, and we thus obtain a homomorphism of groups
	\centeredalign{
		N_{G_i}(T_{K_i}) &\rightarrow H_i := \{ F \in \Aut(\liet_{\liek_i}) \mid \exists X \in \lieg_i: \, {e^{\ad_X}}_{|\liet_{\liek_i}} = F\}, \\
		%\exp(X) &\mapsto {\Ad_{\exp(X)}}_{|\liet_{\liek_i}} = {e^{\ad_X}}_{|\liet_{\liek_i}}.
		\exp(X) &\mapsto \rest{\Ad}{\exp(X)}{\liet_{\liek_i}} = \rest{e^{\ad_X}}{}{\liet_{\liek_i}}.
	}
	This homomorphism is surjective and has as kernel $Z_{G_i}(T_{K_i})$, which is exactly the connected component of $N_{G_i}(T_{K_i})$ containing the identity. Therefore, 
	\cref{mt-proposition-number-of-components-of-homogeneous-space} implies that
	\centeredalign{
		\dim{H^0(N_{G_i}(T_{K_i}))} &= |N_{G_i}(T_{K_i})/Z_{G_i}(T_{K_i})| = |H_i|,
	}
	and because $\Phi$ is a Lie algebra isomorphism, the map $H_1 \rightarrow H_2$ given by
	\centeredalign{
		F = \rest{e^{\ad_X}}{}{\liet_{\liek_1}} & \mapsto \rest{e^{\ad_{\Phi(X)}}}{}{\liet_{\liek_2}} = \rest{\Phi}{}{\liet_{\liek_1}}\circ F\circ \rest{\Phi}{}{\liet_{\liek_1}}^{-1}
	}
	induces an isomorphism $H_1 \cong H_2$. In particular, $|H_1| = |H_2|$.
\end{proof}

\begin{corollary}
	\label{mt-corollary-characterization-of-being-non-cohomologous-to-zero-in-terms-of-equivariant-formality}
	Assume $K \subseteq G$ are two compact and connected Lie groups. The following are equivalent
	\begin{enumerate}
		\item
		\label{mt-proposition-characterization-of-being-non-cohomologous-to-zero-in-terms-of-equivariant-formality-item-1}
		The isotropy action of $K$ on $G/K$ is equivariantly formal and for every maximal torus $T \subseteq K$ the fixed point set $(G/K)^T$ is connected.
		
		\item
		\label{mt-proposition-characterization-of-being-non-cohomologous-to-zero-in-terms-of-equivariant-formality-item-2}
		The isotropy action of $K$ on $G/K$ is equivariantly formal and for some maximal torus $T \subseteq K$ the fixed point set $(G/K)^T$ is connected.
		
		\item
		\label{mt-proposition-characterization-of-being-non-cohomologous-to-zero-in-terms-of-equivariant-formality-item-3}
		$K$ is non-cohomologous to zero in $G$.
	\end{enumerate}
\end{corollary}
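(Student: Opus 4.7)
The plan is to reduce all three conditions to numerical identities accessible via the machinery already in place. Because $G$ and $K$ are compact and connected, the isomorphism $H^{\ast}(G)\cong\wedge{}P_G$ recalled in \cref{mt-subsection-cohomology-homogeneous-spaces} gives $\dim{H^{\ast}(G)} = 2^{\rank{G}}$ and likewise $\dim{H^{\ast}(K)} = 2^{\rank{K}}$. Consequently, the characterization of \emph{non-cohomologous to zero} in \cref{mt-theorem-equivalent-characterizations-of-being-non-cohomologous-to-zero}\,\textup{(iii)} collapses to the single numerical identity $\dim{H^{\ast}(G/K)} = 2^{\rank{G} - \rank{K}}$, while \cref{mt-proposition-dimension-of-fixed-point-set-of-torus} supplies $\dim{H^{\ast}((G/K)^{T})} = 2^{\rank{G} - \rank{K}}\cdot\dim{H^{0}((G/K)^{T})}$ for any maximal torus $T \subseteq K$. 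The entire proof will be a triangulation of these two formulas via \cref{mt-proposition-equivalent-characterizations-of-equivariant-formality}.

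The implication \textup{(i)}$\Rightarrow$\textup{(ii)} is tautological. For \textup{(ii)}$\Rightarrow$\textup{(iii)}, I would assume that $(G/K)^{T}$ is connected for some maximal torus $T\subseteq K$ and that the isotropy action is equivariantly formal; then the displayed formula yields $\dim{H^{\ast}((G/K)^{T})} = 2^{\rank{G} - \rank{K}}$, and \cref{mt-proposition-equivalent-characterizations-of-equivariant-formality}\,\textup{(iii)} identifies this with $\dim{H^{\ast}(G/K)}$, thereby establishing the numerical reformulation of \textup{(iii)}.

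For \textup{(iii)}$\Rightarrow$\textup{(i)}, I would assume $K$ is non-cohomologous to zero and fix any maximal torus $T\subseteq K$. Combining $\dim{H^{\ast}(G/K)} = 2^{\rank{G} - \rank{K}}$ with the Borel localization inequality $\dim{H^{\ast}((G/K)^{T})} \leq \dim{H^{\ast}(G/K)}$ (noted after \cref{mt-proposition-equivalent-characterizations-of-equivariant-formality}) and the formula in \cref{mt-proposition-dimension-of-fixed-point-set-of-torus} forces $\dim{H^{0}((G/K)^{T})} \leq 1$; since $eK$ is $T$-fixed the fixed point set is non-empty, so $\dim{H^{0}((G/K)^{T})} = 1$, yielding connectedness, while equality $\dim{H^{\ast}((G/K)^{T})} = \dim{H^{\ast}(G/K)}$ yields equivariant formality by \cref{mt-proposition-equivalent-characterizations-of-equivariant-formality}.

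The argument is essentially bookkeeping: the only point that requires any care is that the three exponents of $2$ appearing in \cref{mt-theorem-equivalent-characterizations-of-being-non-cohomologous-to-zero}\,\textup{(iii)} and in \cref{mt-proposition-dimension-of-fixed-point-set-of-torus} conspire to give matching numerical identities, and this is automatic from the exterior-algebra presentation of the cohomology of a compact connected Lie group. I do not expect any genuine obstacle.
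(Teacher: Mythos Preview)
Your proposal is correct and follows essentially the same route as the paper: both reduce \textup{(iii)} to the numerical identity $\dim H^{\ast}(G/K)=2^{\rank G-\rank K}$ via \cref{mt-theorem-equivalent-characterizations-of-being-non-cohomologous-to-zero}, and then triangulate with \cref{mt-proposition-dimension-of-fixed-point-set-of-torus} and \cref{mt-proposition-equivalent-characterizations-of-equivariant-formality} exactly as you do. The only cosmetic addition in your version is the explicit remark that $eK\in(G/K)^{T}$ rules out $\dim H^{0}((G/K)^{T})=0$, which the paper leaves implicit.
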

\begin{proof}
	If the first item is satisfied then so is the second. Now suppose that the second condition is fulfilled w.r.t. the maximal torus $T \subseteq K$. Because the isotropy action
	is equivariantly formal and $(G/K)^T$ is connected, \cref{mt-proposition-dimension-of-fixed-point-set-of-torus} shows that
	\centeredalign{
		\dim{H^{\ast}(G/K)} &= 2^{\rank{G} - \rank{K}} = \dim{H^{\ast}(G)}/\dim{H^{\ast}(K)},
	}
	and according to \cref{mt-theorem-equivalent-characterizations-of-being-non-cohomologous-to-zero} this is equivalent to $K$ being non-cohomologous to zero in $G$. Finally, assume that $K$ is non-cohomologous to zero and let 
	$T \subseteq K$ be any maximal torus. Because the dimension of the fixed point set of a torus action is always bounded by the dimension of the space being acted on 
	(cf. the remark after \cref{mt-proposition-equivalent-characterizations-of-equivariant-formality}), we have
	\centeredalign{
		\dim{H^0((G/K)^T)}\cdot 2^{\rank{G} - \rank{K}} &= \dim{H^{\ast}((G/K)^T)} \\
		&\leq \dim{H^{\ast}(G/K)} = 2^{\rank{G} - \rank{K}},
	}
	whence the isotropy action must be equivariantly formal and $(G/K)^T$ connected.
\end{proof}

Recall from \cref{mt-subsection-cohomology-homogeneous-spaces} that the Samelson subspace $P_{(G, K)}$ of a pair of compact and connected Lie groups $(G, K)$ comprises the primitive elements of $G$ which simultaneously 
are contained in the image of the induced map $H^{\ast}(G/K) \rightarrow H^{\ast}(G)$. Another description of this subspace is available using \inlinedef{universal transgression}, that is, transgression in the universal bundle over
the classifying space of $G$, see \cite[Sections 8 and 9]{borelCharacteristicClasses}. Translated to the level of Lie algebras, this corresponds to a map $\tau:P_G \rightarrow S^{\ast}(\lieg^{\ast})^{\Ad_G}$ which 
sends a homogeneous primitive element of degree $d$ to a polynomial of degree $(d + 1)/2$ and whose extension to a homomorphism of algebras $S^{\ast}(P_G) \rightarrow S^{\ast}(\lieg^{\ast})^{\Ad_G}$ is an isomorphism, cf. 
\cite[Theorem I, Section 6.13]{greub}. Moreover, we have:
\begin{theorem}[{\cite[Corollary II, Section 10.8]{greub}}]
	\label{mt-theorem-transgression}
	For every pair $(G, K)$ of compact and connected Lie groups with $K \subseteq G$, an element $\omega \in P_G$ is contained in the Samelson subspace if and only if $\tau(\omega)_{|\liek} = \imath^{\ast}(\tau(\omega))$ is 
	contained in $\im(\imath^{\ast}\circ\tau)\cdot S^{+}(\liek^{\ast})^{\Ad_K}$, where $\imath:\liek \hookrightarrow \lieg$ denotes the canonical inclusion and $\tau$ a transgression.
\end{theorem}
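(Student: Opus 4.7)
The plan is to work in the Koszul--Cartan model for $H^{\ast}(G/K)$. By \cite[Section 10]{greub}, this cohomology is the cohomology of the differential graded algebra
\[
(S^{\ast}(\liek^{\ast})^{\Ad_K} \otimes \wedge P_G, d),
\]
where $d$ is the derivation vanishing on $S^{\ast}(\liek^{\ast})^{\Ad_K}$ and determined on $P_G$ by $d(\omega) = \imath^{\ast}(\tau(\omega))$. Moreover, the projection onto $\wedge P_G \cong H^{\ast}(G)$ modulo the augmentation ideal $S^{+}(\liek^{\ast})^{\Ad_K}$ realises the map $\pi^{\ast}: H^{\ast}(G/K) \to H^{\ast}(G)$. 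In this language, an element $\omega \in P_G$ lies in the Samelson subspace $P_{(G,K)}$ precisely when $\omega$ admits a lift to a cocycle in the model, i.e.\ when there exists $\xi$ in the ideal generated by $S^{+}(\liek^{\ast})^{\Ad_K}$ such that $\omega + \xi$ is $d$-closed.

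Writing out the cocycle condition, this becomes $d\xi = -\imath^{\ast}(\tau(\omega))$. I would then exploit the bigrading by polynomial degree and wedge degree (in the $\wedge P_G$ factor): $d$ raises the former and lowers the latter by one. Since $\imath^{\ast}(\tau(\omega))$ has wedge degree zero, only the wedge-degree-one part $\xi_1 = \sum_i s_i \omega_i$ of $\xi$ (with $s_i \in S^{+}(\liek^{\ast})^{\Ad_K}$ and $\omega_i \in P_G$) is relevant; higher-wedge-degree components of $\xi$ produce coboundaries of positive wedge degree and must vanish separately, so they may be taken to be zero. Since $d\xi_1 = \sum_i s_i\,\imath^{\ast}(\tau(\omega_i))$, the existence of a lift is then equivalent to an identity
\[
\imath^{\ast}(\tau(\omega)) = -\sum_i s_i\,\imath^{\ast}(\tau(\omega_i)),
\]
which is precisely the assertion $\imath^{\ast}(\tau(\omega)) \in \im(\imath^{\ast}\circ\tau)\cdot S^{+}(\liek^{\ast})^{\Ad_K}$. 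Conversely, any such presentation yields the cocycle $\alpha = \omega - \sum_i s_i \omega_i$ exhibiting $\omega$ as an element of $P_{(G,K)}$.

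The principal obstacle, rather than the final linear-algebra step, is establishing the model itself and correctly identifying its differential with transgression. Connecting the abstract universal transgression (defined via the classifying space $BG$) to the Weil homomorphism restricted along $\imath:\liek \hookrightarrow \lieg$ is the classical dictionary underlying Cartan's theorem, and is exactly the place where the characterization of $P_G$ as the transgressive generators is needed. Once that dictionary is in place, the rest of the argument is a direct bigraded computation of the kind sketched above.
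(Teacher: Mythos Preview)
The paper does not supply its own proof of this statement: it is quoted verbatim as \cite[Corollary II, Section 10.8]{greub} and used as a black box. So there is nothing in the paper to compare your argument against.

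That said, your proposal is the standard argument and is essentially how the result is obtained in Greub. The Cartan model $(S^{\ast}(\liek^{\ast})^{\Ad_K}\otimes\wedge P_G,\,d)$ with $d\omega=\imath^{\ast}\tau(\omega)$ computes $H^{\ast}(G/K)$, the quotient by $S^{+}(\liek^{\ast})^{\Ad_K}$ realises $\pi^{\ast}$, and your bigraded analysis correctly isolates the wedge-degree-zero component of the cocycle equation as exactly the membership condition $\imath^{\ast}\tau(\omega)\in\im(\imath^{\ast}\circ\tau)\cdot S^{+}(\liek^{\ast})^{\Ad_K}$. One small wording issue: the higher-wedge-degree components $\xi_k$ ($k\ge 2$) need not vanish in an arbitrary lift; rather, the wedge-degree-zero part of $d(\omega+\xi)=0$ only involves $\xi_1$, so one is free to \emph{choose} $\xi_k=0$ for $k\ge 2$ and still obtain a cocycle. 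You also correctly flag that the real content lies in setting up the model and identifying $d$ with $\imath^{\ast}\circ\tau$; that is Cartan's theorem, and once it is in hand the remainder is the elementary computation you describe.
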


The existence of transgressions allows us to relate the equivariant formality of subgroups of equal rank:
\begin{proposition}
	\label{mt-proposition-relating-cohomology-of-equal-rank-pairs}
	Let $H \subseteq K \subseteq G$ be compact and connected Lie groups with $\rank{H} = \rank{K}$.
	\begin{enumerate}
		\item
		The Samelson subspace $P_{(G, K)}$ for the pair $(G, K)$ is equal to the Samelson subspace $P_{(G, H)}$ for the pair $(G, H)$.
		In particular, $(G, K)$ is a Cartan pair if and only if $(G, H)$ is a Cartan pair.
		
		\item
		If $(G, K)$ and $(G, H)$ are Cartan pairs then 
		\centeredalign{
			\dim H^{\ast}(G/K)\cdot\frac{|W(K)|}{|W(H)|} &= \dim H^{\ast}(G/H),
		}
		where $W(K)$ and $W(H)$ denote the (analytic) Weyl groups of $K$ and $H$, respectively. 
		
		\item
		\label{mt-proposition-relating-cohomology-of-equal-rank-pairs-item-eq-formality}
		If $(G, K)$ and $(G, H)$ are Cartan pairs then the isotropy action of $K$ on $G/K$ is equivariantly formal if and only if the isotropy action of $H$ on $G/H$ is equivariantly formal.
	\end{enumerate}
\end{proposition}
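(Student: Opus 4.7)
The plan is to combine \cref{mt-proposition-dimension-of-fixed-point-set-of-torus} with part (ii) of the proposition to reduce both equivariant formality statements to the same numerical identity. First I would exploit the equal rank assumption: since $H \subseteq K$ and $\rank{H} = \rank{K}$, any maximal torus $T_H \subseteq H$ is a torus in $K$ of rank $\rank{K}$, hence a maximal torus in $K$ as well. Fixing such a common $T := T_H = T_K$, the quantity $\dim H^0(N_G(T))$ appearing in \cref{mt-proposition-dimension-of-fixed-point-set-of-torus} is the same for both pairs, and depends only on $G$ and $T$ (not on $K$ or $H$).

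Next, using \cref{mt-proposition-equivalent-characterizations-of-equivariant-formality} together with \cref{mt-proposition-dimension-of-fixed-point-set-of-torus}, the isotropy action of $K$ on $G/K$ is equivariantly formal if and only if
\centeredalign{
\dim H^{\ast}(G/K) = 2^{\rank{G} - \rank{K}} \cdot \dim H^0(N_G(T))/|W(K)|,
}
and analogously the isotropy action of $H$ on $G/H$ is equivariantly formal if and only if the same equation holds with $K$ replaced by $H$. Because $\rank{K} = \rank{H}$, the factors $2^{\rank{G} - \rank{K}}$ and $2^{\rank{G} - \rank{H}}$ coincide, so both conditions share the same right-hand side after dividing by the respective Weyl group order.

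Finally, I would invoke part (ii) of the current proposition, which — under the standing assumption that $(G, K)$ and $(G, H)$ are both Cartan pairs — asserts $\dim H^{\ast}(G/H) = \dim H^{\ast}(G/K) \cdot |W(K)|/|W(H)|$. Multiplying the $K$-equivariant formality identity by $|W(K)|/|W(H)|$ thus transforms its left-hand side into $\dim H^{\ast}(G/H)$ and its right-hand side into $2^{\rank{G} - \rank{H}} \cdot \dim H^0(N_G(T))/|W(H)|$, which is exactly the $H$-equivariant formality identity; the reverse direction is the same computation read backwards.

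No substantial obstacle is expected: the proof is a short manipulation of the four numerical quantities $\dim H^{\ast}(G/K)$, $\dim H^{\ast}(G/H)$, $|W(K)|$, $|W(H)|$, once one observes that a common maximal torus can be chosen and that \cref{mt-proposition-dimension-of-fixed-point-set-of-torus} isolates the $K$- or $H$-dependence of $\dim H^{\ast}((G/K)^T)$ purely through $|W(K)|$ (respectively $|W(H)|$). The only genuine input is the Cartan-pair hypothesis, which is needed solely to apply part (ii).
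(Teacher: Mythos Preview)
Your proposal addresses only part~(iii) of the proposition; parts~(i) and~(ii) are not proved but simply invoked. If the intent was to prove only part~(iii), then your argument is correct and essentially identical to the paper's: choose a common maximal torus $T \subseteq H \subseteq K$, rewrite equivariant formality of each isotropy action via \cref{mt-proposition-equivalent-characterizations-of-equivariant-formality} and \cref{mt-proposition-dimension-of-fixed-point-set-of-torus} as a numerical identity, and pass from one identity to the other by multiplying through by $|W(K)|/|W(H)|$ and applying part~(ii).

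If, however, the full proposition is to be proved, then the substantial content lies in part~(i), which your proposal does not touch. The paper establishes $P_{(G,K)} = P_{(G,H)}$ by first noting the easy inclusion $P_{(G,K)} \subseteq P_{(G,H)}$ from the factorization $G \to G/H \to G/K$, and then arguing the reverse inclusion via the transgression description of the Samelson subspace (\cref{mt-theorem-transgression}) together with Chevalley restriction to the common maximal torus $\liet$: a Samelson element for $(G,H)$ yields a factorization $\tau(\omega)_{|\liet} = f_{|\liet}\cdot g$ with $g \in S^{+}(\liet^{\ast})^{W(H)}$, and one checks using integrality of $S^{\ast}(\liet^{\ast})$ that $g$ is in fact $W(K)$-invariant. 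Part~(ii) then follows from part~(i) and \cref{mt-theorem-dimension-of-image-of-weil-homomorphism}. Without these, your invocation of part~(ii) in the proof of~(iii) is circular.
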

\begin{remark}
	\begin{enumerate}
		\item
		That $H$ is a Cartan pair if and only if $K$ is so is not a new result and was already noted in \cite[p. 212]{onishchik}. In fact, this observation was used in \cite{stepien} to prove formality, in the sense of Sullivan, of 
		generalized symmetric spaces.
		
		\item
		We thank Jeffrey Carlson for pointing out to us a different proof of \cref{mt-proposition-relating-cohomology-of-equal-rank-pairs-item-eq-formality} not mentioning the notion of Cartan pairs, see \cite[Theorem 1.1]{carlson}.
		Thus, the assumption of being a Cartan pair actually is dispensable but allows for a shorter proof and our exposition to remain self-contained.
	\end{enumerate}
\end{remark}
\begin{proof}
	\begin{enumerate}
		\item
		Since $H$ is a subgroup of $K$, the projection $G \rightarrow G/K$ factors through the canonical map $G/H \rightarrow G/K$. In particular, the image of $H^{\ast}(G/K) \rightarrow H^{\ast}(G)$ is contained in
		the image of $H^{\ast}(G/H) \rightarrow H^{\ast}(G)$, and therefore $P_{(G, K)} \subseteq P_{(G, H)}$. To prove the converse inclusion, fix a transgression $\tau:P_G \rightarrow S^{\ast}(\lieg^{\ast})^{\Ad_G}$
		(cf. \cref{mt-theorem-transgression}) and a maximal torus $\liet \subseteq \lieh$, which then also is a maximal torus in $\liek$ because $\liek$ and $\lieh$ are of equal rank. Then according to 
		\cref{mt-theorem-transgression}, an element $\omega \in P_G$ is contained in $P_{(G, K)}$ if and only if the restriction $\tau(\omega)_{|\liek}$ of the Polynomial $\tau(\omega)$ to $\liek$ can be written 
		as $\tau(\omega)_{|\liek} = f_{|\liek}\cdot g$ for Polynomials $f \in \im(\tau)$ and $g \in S^{+}(\liek^{\ast})^{\Ad_\liek}$. Thus, by virtue of Chevalley's Restriction Theorem 
		(\cite[Theorem 4.9.2]{varadarajan}), a primitive element $\omega$ is contained in $P_{(G, K)}$ if and only if $\tau(\omega)_{|\liet} = f_{|\liet}\cdot g$ for polynomials $f \in \im(\tau)$ and 
		$g \in S^{+}(\liet^{\ast})^{W(K)}$, where $W(K) = N_K(T)/T$ denotes the Weyl group of $K$ w.r.t. $T$. Now let $\omega \in P_{(G, H)}$. By the same reasoning we must have $\tau(\omega)_{|\liet} = f_{|\liet}\cdot g$ with 
		$f \in \im(\tau)$ and $g \in S^{+}(\liet^{\ast})^{W(H)}$. Notice that since  $\tau(\omega)$  and $f$ are polynomials in $\lieg$ their restrictions to $\liet$ must actually be invariant under $W(K)$, that is 
		$\tau(\omega)_{|\liet}, f_{|\liet} \in S^{\ast}(\liet^{\ast})^{W(K)}$. Hence if $kT \in W(K)$ is arbitrary then
		\centeredalign{
			f_{|\liet}\cdot g &= (kT).(f_{|\liet}\cdot g) = f_{|\liet}\cdot (kT.g),
		}
		and because $S^{\ast}(\liet^{\ast})$ is an integral domain this either forces $f_{|\liet}$ to vanish or $kT.g = g$ to hold. In either case it follows that $\omega \in P_{(G, K)}$. 
		
		\item
		Because $(G, K)$ and $(G, H)$ are Cartan pairs, we have% by \cref{mt-theorem-weil-homomorphism}
		\centeredalign{
			\dim{H^{\ast}(G/K)} &= \dim{\im(\omega_{(G, K)})}\cdot 2^{\rank{G} - \rank{K}},
		}
		and analogously
		\centeredalign{
			\dim{H^{\ast}(G/H)} &= \dim{\im(\omega_{(G, H)})}\cdot 2^{\rank{G} - \rank{H}}.
		}
		Here, $\omega_{(G, K)}$ and $\omega_{(G, H)}$ are the Weil homomorphisms of the pairs $(G, K)$ and $(G, H)$, respectively. Because of \cref{mt-theorem-dimension-of-image-of-weil-homomorphism}
		the dimension of the image of $\omega_{(G, K)}$ computes as
		\centeredalign{
			\dim{\im(\omega_{(G, K)})} &= \prod_{i=1}^{\rank{K}}\frac{g_i + 1}{l_i + 1} = \frac{1}{|W(K)|}\prod_{i=1}^{\rank{K}}\frac{g_i + 1}{2},
		}
		where 
		\centeredalign{
			\sum_{i=1}^{\rank{K}} t^{l_i}, \sum_{i=1}^{\rank{G}}t^{g_i} \text{ and } \sum_{i=\rank{K} + 1}^{\rank{G}}t^{g_i}
		}
		are the Poincar\'e polynomials of $P_K$, $P_G$ and the Samelson subspace of $(G, K)$, respectively. But since by the second part the Samelson subspaces of $(G, K)$ and $(G, H)$ coincide, we must also have
		\centeredalign{
			\dim{\im(\omega_{(G, K)})}\cdot\frac{|W(K)|}{|W(H)|} &= \dim{\im(\omega_{(G, H)})}.
		}
		
		\item
		Let $T \subseteq H \subseteq K$ be a maximal torus. By 
		\cref{mt-proposition-equivalent-characterizations-of-equivariant-formality,mt-proposition-dimension-of-fixed-point-set-of-torus} the isotropy action of $K$ on $G/K$ is equivariantly formal if and only if
		\centeredalign{
			\dim{H^{\ast}(G/K)} &= 2^{\rank{G} - \rank{K}}\dim{H^0(N_G(T))}/|W(K)|.
		}
		Multiplying both sides with $|W(K)|/|W(H)|$ and using the second part of this Proposition we see that this condition is equivalent to
		\centeredalign{
			\dim{H^{\ast}(G/H)} &= 2^{\rank{G} - \rank{K}}\dim{H^0(N_G(T))}/|W(H)|,
		}
		and, using \cref{mt-proposition-equivalent-characterizations-of-equivariant-formality} and \cref{mt-proposition-dimension-of-fixed-point-set-of-torus} again, 
		this equality is satisfied if and only if the isotropy action of $H$ on $G/H$ is equivariantly formal.
	\end{enumerate}
\end{proof}			

Given that most of our results involve the notion of Cartan pairs raises the question as to how these two notions of formality are related. As the following example shows, being a Cartan pair is not sufficient for the isotropy 
action to be equivariantly formal. 
\begin{example}
	Consider a compact and connected Lie group $G$ together with a $1$-dimensional subtorus $K$. Such pairs $(G, K)$ are always Cartan pairs as can be verified using \cref{mt-theorem-transgression}. To see this, fix a 
	transgression $\tau$ and, assuming the rank of $G$ to be at least $2$, choose a basis of homogeneous elements $\omega_1, \ldots, \omega_r \in P_G$. Let $f_{i|\liek}$ denote the restriction of $f_i = \tau(\omega_i)$ to 
	$\liek$ and suppose $\deg(f_1) \leq \deg(f_i)$. Then $f_{i|\liek} = f_{1|\liek}g$ for some polynomial $g$ on $\liek$ because $\liek$ is $1$-dimensional, so if $g$ is non-constant then by 
	\cref{mt-theorem-transgression} already $\omega_i \in P_{(G, K)}$ since every polynomial on $\liek$ is invariant. Otherwise, if $g$ is constant, say with image $c \in \R$, then $\omega_i - c\omega_1$ is a non-zero primitive 
	element whose image under $\tau$ restricts to zero in $S^{\ast}(\liek^{\ast})$ and hence is contained in $P_{(G, K)}$. Thus, $\dim{P_K} + \dim{P_{(G, K)}} = \dim{P_G}$.
	
	Pairs $(G, K)$ of this type were studied and the question of equivariant formality of the isotropy action settled in \cite{carlson}, see \cite[Algorithm 1.4]{carlson}. One of the non-equivariantly formal examples, which was 
	already given in \cite{shiga}, is the following: Let $S \subseteq SU(3)$ be the $1$-dimensional torus consisting of diagonal matrices with diagonal entries $e^{it}$, $e^{2it}$ and $e^{-3it}$, $t \in \R$. Then our previous 
	considerations show
	\centeredalign{
		\dim{H^{\ast}(SU(3)/S)} &= \dim{\im(\omega_{(SU(3), S)})}\cdot\dim\wedge P_{(SU(3), S)} = 2\cdot 2 = 4.
	}
	However, the normalizer $N_{SU(3)}(S)$ is connected, and so 
	\centeredalign{
		\dim{H^{\ast}((SU(3)/S)^S)} &= \frac{\dim{H^0(N_{SU(3)}(S))}}{|W_S(S)|}\cdot 2^{2-1} = 2 < 4.
	}
	It follows that the isotropy action of $S$ on $SU(3)/S$ can not be equivariantly formal.
\end{example}
	
	%%% --- T H E   C A S E   O F   S I M P L E   L I E   G R O U P S
	\section{The case of simple Lie groups}\label{mt-section-proof-of-main-theorem-for-simple-lie-groups}
As an intermediate step, we shall show that \cref{mt-theorem-main-theorem} is true when considering those Lie groups pairs $(G, K)$ for which $G$ additionally is simple. Choosing a maximal torus $\liet_{\lieg}$ in the simple 
Lie algebra $\lieg$ of $G$, it is a well-known fact (cf. \cite[Section X.5]{helgason} and \cite[Section 5]{wolf}) that an automorphism of the Dynkin diagram w.r.t. $\liet_{\lieg}$ (referred to as a symmetry of the 
Dynkin-Diagram in \cite{wolf}) induces a finite order automorphism $\tau$ on $\lieg$ and that the elements of the form $\tau\circ\Ad_g$, $g \in G$, exhaust all the automorphisms on $\lieg$ as $\tau$ ranges over all 
automorphisms of the Dynkin diagram. Moreover, the fixed point set $\lieh$ of such an automorphism of the Dynkin diagram has $\liet_{\lieh} = \liet_{\lieg} \cap \lieh$ as Cartan subalgebra, and if $\sigma$ is an automorphism 
on $\lieg$ with $\sigma \in \tau\Ad(G)$ then there exists an element $X \in \liet_{\lieh}$ and an inner automorphism $\alpha$ with
\centeredalign{
	\alpha\circ\sigma\circ\alpha^{-1} &= \tau\circ\Ad_{\exp(X)},
}
see \cite[Lemma 5.3]{wolf}. This in particular implies the fixed point set of $\tau\circ\Ad_{\exp(X)}$ to be mapped by $\alpha^{-1}$ to the fixed point set of $\sigma$, and since the condition of being a Cartan pair as well as 
the isotropy action to be equivariantly formal is invariant under Lie algebra isomorphisms (\cref{mt-corollary-equivariant-formality-only-depends-on-lie-algebra-pair}), it suffices to consider Lie group pairs $(G, K)$ arising 
from automorphisms $\sigma = \tau\circ\Ad_{\exp(X)}$ as above. In fact, we may even assume $\sigma = \tau$. Indeed, $\Ad_{\exp(X)}$ is the identity on $\liet_{\lieg}$, and by \cite[Lemma X.5.3]{helgason} the maximal torus 
$\liet_{\lieg}$ is equal to $Z_{\lieg}(\liet_{\lieh})$, the centralizer of $\liet_{\lieh}$ in $\lieg$. Since any other torus $\liet \supseteq \liet_{\lieh}$ thus is contained in $\liet_{\lieg}$ and $\sigma$ only fixes 
$\liet_{\lieh}$ on $\liet_{\lieg}$, $\liet_{\lieh}$ must be a maximal torus for both $\liek$ and $\lieh$; then \cref{mt-proposition-relating-cohomology-of-equal-rank-pairs} shows that $(G, K)$ is a Cartan pair whose isotropy action 
is equivariantly formal if and only if the same is true for the pair $(G, H)$ with $H \subseteq G$ the connected Lie subgroup corresponding to $\lieh$ (a similar argument, for automorphisms of finite order, was used in 
\cite{stepien} to prove formality of generalized symmetric spaces).

However, almost all such pairs $(G, H)$ arising from automorphisms on the Dynkin diagram are symmetric spaces, which were shown to be equivariantly formal in \cite{goertsches}, the single exception being pairs coming from
the triality automorphism on Lie algebras of type $D_4$. We can say even more: if $\sigma$ is built from an involution on the Dynkin diagram then by \cite[Lemma 5.3]{wolf} $\sigma$ satisfies the assumpions of 
\cite[Proposition 4.13]{goertsches}. Thus, the fixed point set in $G/H$ of the action of a maximal torus in $H$ must be connected by \cite[Proposition 4.14]{goertsches}, whence $H$ is even non-cohomologous to zero in $G$, cf. 
\cref{mt-corollary-characterization-of-being-non-cohomologous-to-zero-in-terms-of-equivariant-formality}. Since by \cite[p. 328]{borelLieCohomology} or \cite[Theorem 5]{kotschick} $H$ also is non-cohomologous to zero in $G$ for the 
pairs $(G, H) \cong (\operatorname{Spin}(8), G_2)$ being induced by the triality automorphism, we conclude \cref{mt-theorem-main-theorem} in the case of simple Lie groups $G$.
	
	%%% --- P R O O F   O F   T H E   M A I N   T H E O R E M
	\section{Proof of the main theorem}\label{mt-section-proof-of-main-theorem}
The remaining proof consists of a reduction of the general case considered in \cref{mt-theorem-main-theorem} to the special case dealt with in the previous section. Thus, we let $(G, K)$ be a pair of Lie groups as in 
\cref{mt-theorem-main-theorem} and write the Lie algebra of $G$ as 
\centeredalign{
	\lieg &= Z(\lieg)\oplus\bigoplus_{i=1}^r\lieg_i,
}
where $Z(\lieg)$ is the center and $[\lieg, \lieg] = \bigoplus_{i=1}^r\lieg_i$ is the sum of all simple ideals of $\lieg$. The automorphism $\sigma$ defining $K$ respects this decomposition in that its derivative $d\sigma$ 
maps $Z(\lieg)$ onto itself and permutes the set of simple ideals $\mathcal{I} = \{\lieg_1, \ldots, \lieg_r\}$. Hence the cyclic subgroup generated by $d\sigma$ partitions $\mathcal{I}$ into orbits 
$\mathcal{I}_1, \ldots, \mathcal{I}_k$, meaning that $d\sigma$ restricts to an automorphism on each such orbit $\liealg{o}_j = \bigoplus_{\liealg{i}\in\mathcal{I}_j}\liealg{i}$. The corresponding decomposition of the Lie 
algebra $\liek$ of $K$ then reads
\centeredalign{
	\liek &= \liek_0\oplus\liek_1\oplus\ldots\oplus\liek_k,
}
$\liek_0 = \liek\cap Z(\lieg)$ being the fixed point set of $d\sigma$ on $Z(\lieg)$, $\liek_j = \liealg{k}\cap\liealg{o}_j$ that on $\liealg{o}_j$. Let now $O_j$ be the connected subgroup of $G$ having Lie algebra $\liealg{o}_j$
and denote by $Z$ the connected component of the center of $G$. Next, also realize $K_0 \subseteq Z$, $K_j \subseteq O_j$ as the unique connected subgroups having Lie algebras $\liek_0$ and $\liek_j$, 
respectively. In the present situation, the multiplication map $m:Z\times{}O_1\times\ldots\times{}O_k \rightarrow G$, $(g_0, g_1, \ldots, g_k) \mapsto g_0g_1\ldots{}g_k$, is a
Lie group homomorphism inducing an isomorphism on the level of Lie algebras, and its derivative maps the Lie algebra of $K_0\times{}K_1\times\ldots\times{}K_k$ isomorphically onto $\liek$. In view of 
\cref{mt-corollary-equivariant-formality-only-depends-on-lie-algebra-pair}, the conclusions of \cref{mt-theorem-main-theorem} will thus be true for the pair $(G, K)$ if and only if they are true for the pair 
$(G', K') = (Z\times{}O_1\times{}\ldots\times{}O_k, K_0\times\ldots\times{}K_k)$. However, the canonical identification of $G'/K'$ with $M = Z/K_0\times{}O_1/K_1\times\ldots\times{}O_k/K_k$ is $K'$-equivariant with respect to 
the componentwise isotropy action of $K'$ on $M$, and thus the K\"unneth formula together with \cref{mt-proposition-equivalent-characterizations-of-equivariant-formality} shows that the action of $K'$ on $G'/K'$ is 
equivariantly formal if and only if each of the factors of $K'$ acts in an equivariantly formal fashion on the corresponding factor in $M$. 

Henceforth, our goal is to verify the equivariant formality of the isotropy actions of $K_0$ on $Z/K_0$ and each $K_j$ on $O_j/K_j$, $1 \leq j \leq k$. Now $Z$ is abelian, whence $K_0$ is even non-cohomologous to zero in $Z$, 
and each $O_j$ associated to an orbit $\liealg{o}_j$ of length one is the Lie group of one of the simple ideals of $\lieg$, which we already have dealt with in \cref{mt-section-proof-of-main-theorem-for-simple-lie-groups}.
Therefore, we may restrict our attention to those factors $O_j$ of $G'$ whose Lie algebras $\liealg{o}_j = \liealg{i}_1\oplus\ldots\oplus\liealg{i}_l$ consist of $l > 1$ summands, and fixing one such $\liealg{o}_j$, we may 
assume the summands to be enumerated such that $d\sigma^i(\liealg{i}_1) = \liealg{i}_{i+1}$ and $d\sigma^l(\liealg{i}_1) = \liealg{i}_1$. This arrangement implies the fixed point set $\liek_j$ on $\liealg{o}_j$ to be 
isomorphic to the fixed point set $\lieh$ of $d\sigma^l$ on $\liealg{i}_1$. More precisely, if we let $\Delta(\lieh) \subseteq \liealg{i}_1\oplus\ldots\oplus\liealg{i}_1$ denote the diagonal embedding of $\lieh$ into the 
$l$-fold direct sum of $\liealg{i}_1$ then the map $d\sigma^0\oplus\ldots\oplus{}d\sigma^{l-1}$ is an isomorphism  $\liealg{i}_1\oplus\ldots\oplus\liealg{i}_1 \rightarrow \liealg{i}_1\oplus\ldots\oplus\liealg{i}_l$ mapping 
$\Delta(\lieh)$ onto $\liek_j$. We thus may assume $O_j$ to be the $l$-fold product $O_j = I\times\ldots\times{}I$ of a compact, simple Lie group $I$ and $K_j = \Delta(H)$ to be the diagonal embedding of the connected 
component $H \subseteq I$ of the fixed point  set of an automorphism on $I$. As seen in \cref{mt-section-proof-of-main-theorem-for-simple-lie-groups}, we may find a maximal torus $T$ of $I$ such that $H$ shares the maximal 
torus $T \cap H$ with the fixed point set of an automorphism on $I$ arising from an automorphism on the Dynkin diagram of $I$, and therefore we may assume $H$ to even be defined by an automorphism of the Dynkin diagram right 
away. By \cref{mt-section-proof-of-main-theorem-for-simple-lie-groups}, such an $H$ not only is equivariantly  formal but even non-cohomologous to zero in $I$, hence if we can show that also $\Delta(I)$ is non-cohomologous to 
zero in $O_j$ then also $\Delta(H)$ must be, for the inclusion $\Delta(\lieh) \hookrightarrow \liealg{o}_j$ factors through the inclusion $\Delta(\liealg{i}) \hookrightarrow \liealg{o}_j$. But this is being guaranteed by 
\cref{mt-theorem-equivalent-characterizations-of-being-non-cohomologous-to-zero} and the fact that the composite map
\centeredalign{
	\xymatrix{
		\bigotimes_{i=1}^l S^{\ast}(\liealg{i}^{\ast})^{\Ad_I} \cong S^{\ast}(\liealg{o}_j^{\ast})^{\Ad_{O_j}} \ar[r]	& 
			S^{\ast}(\Delta(\liealg{i})^{\ast})^{\Ad_{\Delta(I)}} \ar[r]^{\Delta^{\ast}}_{\cong}	& S^{\ast}(\liealg{i}^{\ast})^{\Ad_I},
	}
}
in which the undecorated map is induced by the canonical inclusion $\Delta(\liealg{i}) \hookrightarrow \liealg{o}_j$, is a surjection. This finishes the proof of \cref{mt-theorem-main-theorem}.
	
 	\bibliographystyle{amsplain}
 	\bibliography{mt-bibliography}

\providecommand{\bysame}{\leavevmode\hbox to3em{\hrulefill}\thinspace}
\providecommand{\MR}{\relax\ifhmode\unskip\space\fi MR }
% \MRhref is called by the amsart/book/proc definition of \MR.
\providecommand{\MRhref}[2]{%
  \href{http://www.ams.org/mathscinet-getitem?mr=#1}{#2}
}
\providecommand{\href}[2]{#2}
\begin{thebibliography}{10}

\bibitem{borelLieCohomology}
A.~Borel, \emph{{Sur l'homologie et la cohomologie des groupes de Lie compacts
  connexes}}, Amer. J. Math. \textbf{76} (1954), 273--–342.

\bibitem{borelCharacteristicClasses}
\bysame, \emph{{Topology of Lie groups and characteristic classes}}, Bull.
  Amer. Math. Soc. \textbf{61} (1955), no.~5, 397--432.

\bibitem{carlson}
J.~D. Carlson, \emph{{Equivariant formality of isotropic torus actions, I}},
  preprint, http://arxiv.org/abs/1410.5740, 2014.

\bibitem{goertsches}
O.~Goertsches, \emph{{The equivariant cohomology of isotropy actions on
  symmetric spaces}}, Doc. Mat. \textbf{17} (2012), 79--94.

\bibitem{greub}
W.H. Greub, S.~Halperin, and R.~Vanstone, \emph{{Connections, Curvature, and
  Cohomology: Cohomology of principal bundles and homogeneous spaces}},
  Connections, Curvature, and Cohomology, vol. III, Academic Press, 1976.

\bibitem{guilleminGKMDescription}
V.~Guillemin, T.~Holm, and C.~Zara, \emph{{A GKM description of the equivariant
  cohomology ring of a homogeneous space}}, Journal of Algebraic Combinatorics
  \textbf{23} (2006), no.~1, 21--41.

\bibitem{guilleminMomentMaps}
V.~Guillemin, Y.~Karshon, and V.L. Ginzburg, \emph{{Moment Maps, Cobordisms,
  and Hamiltonian Group Actions}}, Mathematical surveys and monographs,
  American Mathematical Society, 2002.

\bibitem{guilleminSupersymmetry}
V.W. Guillemin and S.~Sternberg, \emph{{Supersymmetry and Equivariant de Rham
  Theory}}, Mathematics past and present, Springer, 1999.

\bibitem{helgason}
S.~Helgason, \emph{{Differential Geometry, Lie Groups, and Symmetric Spaces}},
  Graduate studies in mathematics, vol.~34, American Mathematical Society,
  1978.

\bibitem{hsiang}
W.Y. Hsiang, \emph{{Cohomology theory of topological transformation groups}},
  Ergebnisse der Mathematik und ihrer Grenzgebiete, Springer-Verlag, 1975.

\bibitem{kotschick}
D.~Kotschick and S.~Terzic, \emph{{On formality of generalised symmetric
  spaces}}, Math. Proc. Cambridge Philos. Soc. \textbf{134} (2003), 491--505.

\bibitem{onishchik}
A.L. Onishchik, \emph{{Topology of transitive transformation groups}}, Johann
  Ambrosius Barth, 1994.

\bibitem{shiga}
H.~Shiga, \emph{{Equivariant de Rham cohomology of homogeneous spaces}},
  Journal of Pure and Applied Algebra \textbf{106} (1996), no.~2, 173 -- 183.

\bibitem{stepien}
Z.~Stepien, \emph{{On Formality of a Class of Compact Homogeneous Spaces}},
  Geometriae Dedicata \textbf{93} (2002), no.~1, 37--45.

\bibitem{varadarajan}
V.S. Varadarajan, \emph{{Lie Groups, Lie Algebras, and Their Representation}},
  Graduate Texts in Mathematics, vol. 102, Springer-Verlag, 1984.

\bibitem{wolf}
J.~A. Wolf and A.~Gray, \emph{{Homogeneous spaces defined by Lie group
  automorphisms. I}}, J. Differential Geom. \textbf{2} (1968), no.~1, 77--114.

\end{thebibliography}

\end{document}